\documentclass[11pt]{article}
\usepackage[margin=1in]{geometry}
\usepackage{amsmath,amssymb,amsthm}
\usepackage[T1]{fontenc}
\usepackage{lmodern}
\usepackage[hidelinks]{hyperref}
\newtheorem{theorem}{Theorem}
\newtheorem{lemma}[theorem]{Lemma}
\newtheorem{corollary}[theorem]{Corollary}
\theoremstyle{remark}
\newtheorem{remark}[theorem]{Remark}
\newcommand{\E}{\mathbb E}
\newcommand{\Pp}{\mathbb P}
\newcommand{\Bin}{\operatorname{Bin}}
\newcommand{\Fint}{\mathcal F^{\circ}}
\title{Reflection of optimal supports for\texorpdfstring{\\}{ }$k$-wise independent bits}
\author{Roy Hermann}
\date{September 7, 2026}
\begin{document}
\maketitle
\begin{abstract}
For even $k$, we derive a reflection identity for the basic count distributions in the problem of maximizing the probability that all $n$ $k$-wise independent Bernoulli variables equal one. After separating the distinguished node $n$, the other support nodes reflect by $s\mapsto n-1-s$ while the Bernoulli parameter changes from $p$ to $1-p$. Corresponding basic weights differ by an explicit positive factor. Thus interior feasibility sets, and local changes of optimal support with their multiplicities, are reflected. This gives a proof of the exceptional-point reflection in Conjecture 5.11 of Berend, Ernst, Kontorovich and Kumar. The argument uses Lagrange interpolation and an elementary binomial reweighting identity; it does not require a conjectured ordering or connectedness of the feasibility sets.
\end{abstract}

\section{The extremal problem and the basic distributions}

Let $M(n,k,p)$ be the largest possible value of
$\Pp(X_1=\cdots=X_n=1)$ when $X_1,\ldots,X_n$ are $k$-wise independent Bernoulli variables with common mean $p$.
Throughout, $k$ is even, $2\le k\le n-1$, and $0<p<1$.
Berend, Ernst, Kontorovich and Kumar~\cite{BEKK} study exact expressions for this maximum and conjecture a reflection of the parameter values at which several nodes of the optimal support change simultaneously.

Symmetrizing a feasible joint law preserves its objective and its $k$-wise independence. We may therefore work with the count $S=\sum_{i=1}^n X_i$ and its probabilities $w_s=\Pp(S=s)$, $0\le s\le n$. The constraints are
\begin{equation}\label{eq:moments}
 \sum_{s=0}^{n} w_s\binom{s}{r}=\binom{n}{r}p^r,
 \qquad 0\le r\le k,\qquad w_s\ge0.
\end{equation}
Conversely, a nonnegative solution of~\eqref{eq:moments} defines the required joint law by choosing uniformly among binary strings with $s$ ones conditional on $S=s$.
The objective is $w_n$. These $w_s$ are the total count probabilities, rather than the probabilities of individual binary strings.

The dual-feasible bases are characterized in~\cite[Theorem 3.3]{BEKK}. In the even-$k$ case they have the form
\begin{equation}\label{eq:basis}
 I=J\cup\{n\},\qquad
 J=\bigcup_{j=1}^{k/2}\{a_j,a_j+1\}\subseteq\{0,\ldots,n-1\},
\end{equation}
where $a_{j+1}\ge a_j+2$. For such an $I$, denote by $w_s^I(p)$ the unique signed solution of the moment equalities supported on $I$.
Uniqueness follows because the polynomials $\binom{x}{r}$, $0\le r\le k$, form a basis for polynomials of degree at most $k$, and evaluation on $k+1$ distinct nodes is invertible. We call $I$ feasible at $p$ if all its basic weights are nonnegative.

Define the involution
\begin{equation}\label{eq:star}
 J^*=\{n-1-s:s\in J\},\qquad I^*=J^*\cup\{n\}.
\end{equation}
It takes each pair $\{a,a+1\}$ to $\{n-a-2,n-a-1\}$ and preserves the class~\eqref{eq:basis}. The node $n$ is fixed separately.

\section{Reflection of the basic weights}

\begin{theorem}\label{thm:identity}
For every basis~\eqref{eq:basis}, every $s\in J$, and every $p\in(0,1)$,
\begin{equation}\label{eq:identity}
 w_{n-1-s}^{I^*}(1-p)
 =\frac{p(n-s)}{(1-p)(s+1)}\,w_s^I(p).
\end{equation}
Furthermore, $w_n^I(p)\ge p^n>0$.
In particular, if
\[
 \Fint(I)=\{p\in(0,1):w_s^I(p)\ge0\text{ for all }s\in I\},
\]
then $\Fint(I^*)=\{1-p:p\in\Fint(I)\}$.
\end{theorem}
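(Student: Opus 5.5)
The plan is to write every basic weight as a binomial expectation of a Lagrange polynomial. Then the reflection $s\mapsto n-1-s$ follows from the fact that $n-1-\Bin(n-1,p)$ has the law $\Bin(n-1,1-p)$.

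\emph{Expectations.} By~\eqref{eq:moments}, and because the $\binom{x}{r}$, $0\le r\le k$, span the polynomials of degree at most $k$, a signed vector $(w_s)$ supported on $I$ satisfies the moment equalities exactly when
\[
\sum_{s\in I} w_s f(s)=\E f(S_{n,p})
\]
for every polynomial $f$ of degree $\le k$, where $S_{n,p}\sim\Bin(n,p)$. Apply this to the Lagrange basis polynomial $L_s^I$ of the $k+1$ nodes $I$, which equals $1$ at $s$ and $0$ at the other nodes. This gives
\[
w_s^I(p)=\E L_s^I(S_{n,p}).
\]

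\emph{Factoring out the node $n$.} Fix $s\in J$. Since $L_s^I$ vanishes at $n$, we can write
\[
L_s^I(x)=\frac{n-x}{n-s}\,Q_s(x),
\]
where $Q_s$ is the Lagrange polynomial of degree $k-1$ on the $k$ nodes $J$ that is $1$ at $s$. We then use the elementary reweighting identity
\[
\E\bigl[(n-S_{n,p})h(S_{n,p})\bigr]=n(1-p)\,\E h(S_{n-1,p}).
\]
It follows from $(n-t)\binom{n}{t}=n\binom{n-1}{t}$, and holds for any function $h$. Together these give
\[
w_s^I(p)=\frac{n(1-p)}{n-s}\,\E Q_s(S_{n-1,p}).
\]

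\emph{The reflected basis.} Apply the same formula to $I^*$ at parameter $1-p$ and node $n-1-s\in J^*$. The nodes of $J^*$ are the images of $J$ under $x\mapsto n-1-x$, so the Lagrange polynomial of $J^*$ at $n-1-s$ is $Q_s(n-1-x)$. Using $S_{n-1,1-p}\overset{d}{=}n-1-S_{n-1,p}$, we get
\[
w_{n-1-s}^{I^*}(1-p)=\frac{np}{s+1}\,\E Q_s(n-1-S_{n-1,1-p})=\frac{np}{s+1}\,\E Q_s(S_{n-1,p}).
\]
Dividing this by the expression for $w_s^I(p)$ gives exactly~\eqref{eq:identity}.

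\emph{The distinguished weight.} We have
\[
L_n^I(x)=\prod_j\frac{(x-a_j)(x-a_j-1)}{(n-a_j)(n-a_j-1)}.
\]
Each numerator factor $(x-a_j)(x-a_j-1)$ is a product of two consecutive integers when $x$ is an integer, so it is $\ge0$. Each denominator factor is positive, since $a_j+1\le n-1$. Hence $L_n^I(t)\ge0$ for all integers $t\in\{0,\dots,n\}$, and $L_n^I(n)=1$. Therefore
\[
w_n^I(p)=\E L_n^I(S_{n,p})\ge \Pp(S_{n,p}=n)=p^n>0.
\]

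\emph{Feasibility sets.} For $p\in(0,1)$ the factor in~\eqref{eq:identity} is positive, and the weights at $n$ are always positive. So all weights of $I$ at $p$ are nonnegative if and only if all weights of $I^*$ at $1-p$ are. This gives $\Fint(I^*)=1-\Fint(I)$.

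The only step needing any care is the reweighting identity together with the bookkeeping that relates the Lagrange polynomials of $J$ and $J^*$. The main point is to notice that the factor $(n-x)$ converts $\Bin(n,p)$ into $\Bin(n-1,p)$. This is precisely where the separate treatment of the node $n$, and the reflection being $s\mapsto n-1-s$ rather than $s\mapsto n-s$, comes from.
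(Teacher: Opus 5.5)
Your proof is correct and follows the paper's argument essentially step for step. It uses the representation $w_s^I(p)=\E L_s^I(X)$, the factorization $L_s^I(x)=\frac{n-x}{n-s}\ell_s^J(x)$ with the reweighting $(n-x)\binom{n}{x}=n\binom{n-1}{x}$, the reflection $\ell_{n-1-s}^{J^*}(x)=\ell_s^J(n-1-x)$ together with $n-1-\Bin(n-1,1-p)\sim\Bin(n-1,p)$, and nonnegativity of the paired factors for the weight at $n$. The only nit is the word ``dividing'': $w_s^I(p)$ can vanish, and such zeros matter because $\Fint$ is defined by $\ge 0$, so you should say you eliminate the common factor $\E Q_s(S_{n-1,p})$ between your two formulas, which gives the identity at zeros too, as the paper notes.
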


\begin{proof}
For $s\in J$, let
\[
 \ell_s^J(x)=\prod_{t\in J\setminus\{s\}}\frac{x-t}{s-t}
\]
be the Lagrange polynomial on $J$. The Lagrange polynomial for the same node on $I$ is
\begin{equation}\label{eq:lagrange}
 L_s^I(x)=\frac{n-x}{n-s}\ell_s^J(x).
\end{equation}
Let $X\sim\Bin(n,p)$. The moment equalities imply exact agreement with the binomial law on every polynomial of degree at most $k$. Since $L_s^I$ has that degree and is one at $s$ and zero at the other nodes of $I$,
\begin{equation}\label{eq:expectation}
 w_s^I(p)=\E L_s^I(X).
\end{equation}
For any function $f$ on $\{0,\ldots,n\}$, the identity
$(n-x)\binom{n}{x}=n\binom{n-1}{x}$ gives
\begin{equation}\label{eq:reweight}
 \E[(n-X)f(X)]=n(1-p)\E f(Y),\qquad Y\sim\Bin(n-1,p).
\end{equation}
Consequently,
\begin{equation}\label{eq:reduced}
 w_s^I(p)=\frac{n(1-p)}{n-s}\E\ell_s^J(Y).
\end{equation}
Writing $s^*=n-1-s$, interpolation gives
\[
 \ell_{s^*}^{J^*}(x)=\ell_s^J(n-1-x).
\]
If $Y^*\sim\Bin(n-1,1-p)$, then $n-1-Y^*$ has the same law as $Y$. Applying~\eqref{eq:reduced} to $I^*$ at $1-p$ yields
\[
 w_{s^*}^{I^*}(1-p)
 =\frac{np}{s+1}\E\ell_{s^*}^{J^*}(Y^*)
 =\frac{np}{s+1}\E\ell_s^J(Y).
\]
Comparison with~\eqref{eq:reduced} proves~\eqref{eq:identity}, including at zeros of the weights.

The Lagrange polynomial for the distinguished node is
\begin{equation}\label{eq:dual}
 Q_I(x)=\prod_{j\in J}\frac{x-j}{n-j},\qquad
 w_n^I(p)=\E Q_I(X).
\end{equation}
For an integer $x$, every paired factor $(x-a_j)(x-a_j-1)$ is nonnegative, and every denominator $n-j$ is positive. Hence $Q_I(x)\ge0$ for $x\in\{0,\ldots,n\}$, with $Q_I(n)=1$. This proves $w_n^I(p)\ge p^n>0$.
All the multipliers in~\eqref{eq:identity} are positive on $(0,1)$, so the corresponding other weights have the same signs. Applying the same argument to $I^*$ proves the asserted equality of feasibility sets.
\end{proof}

\begin{lemma}\label{lem:optimal}
A feasible basis of the form~\eqref{eq:basis} gives the unique optimal count distribution.
\end{lemma}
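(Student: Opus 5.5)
The plan is LP duality with complementary slackness, using the polynomial $Q_I$ from~\eqref{eq:dual} as the dual certificate. Fix $p$ at which $I=J\cup\{n\}$ is feasible, so $w^I=(w^I_s)_{s\in I}$ (zero off $I$) is a nonnegative solution of~\eqref{eq:moments}. Take any feasible $w$ and set $X_w$ to be a random variable with $\Pp(X_w=s)=w_s$. Since $w$ satisfies the moment equalities, $X_w$ agrees with $X\sim\Bin(n,p)$ on every polynomial of degree at most $k$.

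The degree of $Q_I$ is $|J|=k$, so I will use
\[
 \sum_s w_sQ_I(s)=\E Q_I(X)=w_n^I(p).
\]
By the proof of Theorem~\ref{thm:identity}, $Q_I(s)\ge0$ for every integer $s\in\{0,\ldots,n\}$, and $Q_I(n)=1$. Hence
\[
 w_n^I(p)=w_n+\sum_{s<n}w_sQ_I(s)\ge w_n.
\]
So $w^I$ is optimal; it attains equality because it is supported where $Q_I$ vanishes or on $n$.

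For uniqueness, suppose $w$ is also optimal. Then $\sum_{s<n}w_sQ_I(s)=0$, which forces $w_s=0$ whenever $Q_I(s)>0$. The zeros of $Q_I$ on $\{0,\ldots,n-1\}$ are exactly $J$. The key point is that for an integer $x\notin J$, each factor $(x-a_j)(x-a_j-1)$ is strictly positive, since the only integer roots are $a_j$ and $a_j+1$. Therefore $w$ is supported on $I$. The moment equalities restricted to the $k+1$ nodes of $I$ have a unique solution, by the invertibility noted after~\eqref{eq:basis}, so $w=w^I$.

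The main obstacle is modest: it is checking strict positivity of $Q_I$ off $J\cup\{n\}$, including at $x=n$ and at integers between or beyond the pairs. This follows factor by factor because consecutive-integer pairs leave no integer strictly between their roots. The denominators $n-j$ are positive because $J\subseteq\{0,\ldots,n-1\}$.
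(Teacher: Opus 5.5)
Your proposal is correct and is essentially the paper's proof. You use the Lagrange polynomial $Q_I$ for node $n$ as a dual certificate with $Q_I(s)\ge\mathbf 1_{\{s=n\}}$ on $\{0,\ldots,n\}$, so moment matching up to degree $k$ gives $w_n\le \E Q_I(X)=w_n^I(p)$; uniqueness then follows from strict positivity of $Q_I$ at integers outside $I$ (your factor-by-factor check spells out what the paper states in one line) together with invertibility of the moment system on the $k+1$ nodes of $I$.
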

\begin{proof}
The polynomial in~\eqref{eq:dual} satisfies $Q_I(s)\ge\mathbf 1_{\{s=n\}}$ on the integer support. For any feasible count distribution $\mu$, moment matching therefore gives
\[
 \mu_n\le\sum_{s=0}^n\mu_sQ_I(s)=\E Q_I(X)=w_n^I(p).
\]
A feasible basic distribution attains equality. Moreover, $Q_I(s)>0$ for $s\notin I$, so any distribution attaining equality is supported on $I$. The moment equations then imply uniqueness.
\end{proof}

\section{Exceptional points and local transitions}

For fixed $n,k$, each $w_s^I(p)$ is a polynomial in $p$ and is not identically zero. Indeed, its expression as $\E L_s^I(X)$ is its degree-$n$ Bernstein representation, whose coefficients are $L_s^I(x)$, $0\le x\le n$; these cannot all vanish since $L_s^I(s)=1$.
There are finitely many bases and finitely many roots of these polynomials in $(0,1)$.
Outside this finite set, all basic weights have fixed nonzero signs locally.

At every parameter, the finite linear program~\eqref{eq:moments} is feasible and bounded, and has an optimal basis. The dual-feasible basis characterization in~\cite[Theorem 3.3]{BEKK} supplies a feasible basis of the form~\eqref{eq:basis}. Outside the finite root set it has strictly positive weights, and Lemma~\ref{lem:optimal} makes this basis unique. Thus the unique optimal support is constant on each of the intervening open intervals.
Adjacent intervals with the same support may be merged. We call a boundary $d\in(0,1)$ a transition when the supports $I_-$ and $I_+$ immediately to its left and right differ. Its multiplicity is $|I_-\setminus I_+|=|I_+\setminus I_-|$; it is exceptional when this number exceeds one, as in~\cite[Section 5.3]{BEKK}.

\begin{corollary}\label{cor:transition}
If the transition at $d$ is $I_-\longrightarrow I_+$, then the transition at $1-d$ is $I_+^*\longrightarrow I_-^*$, with the same multiplicity. In particular, every exceptional point $d$ has an exceptional partner $1-d$.
\end{corollary}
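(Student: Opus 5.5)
The plan is to transport the whole piecewise-constant structure of optimal supports through the map $p\mapsto 1-p$, $I\mapsto I^*$, using Theorem~\ref{thm:identity} and Lemma~\ref{lem:optimal}. Let $R$ be the finite set of roots in $(0,1)$ of all basic weight polynomials, and let $O(p)$ denote the unique optimal support for $p\notin R$. The first step is to show that $R$ is symmetric, i.e.\ $1-R=R$. By~\eqref{eq:identity}, for $s\in J$ the weight $w^{I^*}_{n-1-s}(1-p)$ is a positive multiple of $w^I_s(p)$. Hence $p$ is a root of some weight of $I$ in $J$ exactly when $1-p$ is a root of a weight of $I^*$. The weight at node $n$ never vanishes, because $w_n^I\ge p^n>0$. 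Since $I\mapsto I^*$ is an involution on the class~\eqref{eq:basis}, this gives the symmetry.

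The second step is the key claim $O(1-p)=O(p)^*$ for $p\notin R$. Let $I=O(p)$. Then all weights of $I$ at $p$ are strictly positive. By~\eqref{eq:identity} and the positivity of $w_n^{I^*}$, all weights of $I^*$ at $1-p$ are strictly positive, so $I^*$ is feasible at $1-p$. Lemma~\ref{lem:optimal} then says that $I^*$ gives the unique optimal count distribution at $1-p$. Because the weights are strictly positive, its support is exactly $I^*$, so $O(1-p)=I^*$.

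The third step is to read off the transitions. Take a boundary $d$ with $O=I_-$ on $(d-\varepsilon,d)$ and $O=I_+$ on $(d,d+\varepsilon)$, with $I_-\neq I_+$. Reflecting, $O=I_-^*$ on $(1-d,1-d+\varepsilon)$ and $O=I_+^*$ on $(1-d-\varepsilon,1-d)$. The left support at $1-d$ is therefore $I_+^*$ and the right support is $I_-^*$, and these differ because $^*$ is injective. So $1-d$ is a transition $I_+^*\to I_-^*$. There is a subtlety about merging intervals: the merged boundaries are exactly the points where left and right supports differ, and this condition is intrinsic to $O$, so it is preserved by the reflection.

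The last step is the multiplicity, and I expect it to be the only point needing care. Both $I_-$ and $I_+$ contain $n$, and $^*$ is a bijection on $\{0,\ldots,n-1\}$ that fixes $n$. Hence $I_+^*\setminus I_-^*=(I_+\setminus I_-)^*$, which has the same cardinality as $I_+\setminus I_-$. The exceptional case, multiplicity greater than one, follows immediately, and $1-d\in(0,1)$ gives the partner.
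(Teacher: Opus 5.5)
Your proposal is correct and takes essentially the same route as the paper: Theorem~\ref{thm:identity} and Lemma~\ref{lem:optimal} carry the one-sided optimal supports $I_-$ and $I_+$ near $d$ to $I_-^*$ and $I_+^*$ on the reflected sides of $1-d$, with left and right swapped, and the multiplicity is preserved because the node map $s\mapsto n-1-s$, with $n$ fixed, is a bijection. The only difference is that you first prove the global facts $1-R=R$ and $O(1-p)=O(p)^*$ for $p\notin R$, while the paper works directly on small root-free intervals on either side of $d$.
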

\begin{proof}
Choose sufficiently small open intervals immediately on either side of $d$ that contain no further roots of basic weights. Theorem~\ref{thm:identity} sends their feasible bases to $I_-^*$ and $I_+^*$ on the reflected intervals. Reflection reverses their order. Their positive weights and Lemma~\ref{lem:optimal} identify them as the respective unique optimal supports. Since the map on nodes in~\eqref{eq:star}, with $n$ fixed, is a bijection,
\[
 |I_+^*\setminus I_-^*|=|I_+\setminus I_-|=|I_-\setminus I_+|.
\]
The last equality uses $|I_-|=|I_+|=k+1$.
\end{proof}

Corollary~\ref{cor:transition} establishes the reflection assertion of~\cite[Conjecture 5.11]{BEKK} in the even-$k$ setting of that paper's Section 5. Theorem~\ref{thm:identity} also reflects every interior feasibility component, whether or not the whole feasibility set is connected. We make no assertion about the conjectured lexicographic ordering of the bases or the full indexed formulation of~\cite[Conjecture 5.10]{BEKK}; our reflection is the explicit set map~\eqref{eq:star}.

\begin{remark}[Endpoints]
The restriction to $(0,1)$ is material. At $p=1$, every basis containing $n$ is feasible with the degenerate weight $w_n=1$, even if it is infeasible at every nearby interior parameter. These isolated endpoint solutions need not reflect to feasible solutions at $p=0$. The theorem applies to interior feasibility sets and, by continuity of reflection, their closures. It does not assert reflection of the literal full feasibility sets in $[0,1]$.
\end{remark}

\begin{remark}[Objective values]
The symmetry concerns the support structure and parameter locations. It does not assert $M(n,k,p)=M(n,k,1-p)$. If a feasible basic distribution at $p$ is known, its reflected weights away from $n$ are given by~\eqref{eq:identity}, and its remaining weight is determined by normalization.
\end{remark}

\section{Example and exact computational checks}

The exceptional pair $1/3,2/3$ for $n=11,k=4$ appears in~\cite[Table 3]{BEKK}. The adjacent bases are
\begin{align*}
 p=1/3:&\quad \{1,2,4,5,11\}\longrightarrow\{2,3,5,6,11\},\\
 p=2/3:&\quad \{4,5,7,8,11\}\longrightarrow\{5,6,8,9,11\}.
\end{align*}
Both transitions replace two nodes. The nonzero boundary count probabilities are
\begin{align*}
 p=1/3:&\quad (w_2,w_5,w_{11})=(110/243,44/81,1/243),\\
 p=2/3:&\quad (w_5,w_8,w_{11})=(22/81,55/81,4/81).
\end{align*}
Thus the two objective values differ, as expected.

The accompanying standard-library Python program uses exact rational arithmetic. It checks 1,390 bases: all bases for $3\le n\le14$ and even $2\le k\le\min(8,n-1)$, and 60 reproducible larger cases with $20\le n\le150$ and even $k\le16$. It verifies 8,510 instances of~\eqref{eq:identity} as polynomial identities, after clearing the denominator, and 9,900 moment identities coefficient by coefficient. It also checks nonnegativity of~\eqref{eq:dual} at each integer support point, the displayed boundary distributions, and an endpoint-degeneracy example. For each of the four one-sided supports in the example, the first nonzero Taylor coefficient of each weight at the boundary verifies its strict positivity on the indicated side. These finite checks supplement the general proof; they are not a formal proof-assistant verification or an independent expert review.

\section*{Disclosure of AI use}
This research was conducted through an interaction initiated and directed by Roy Hermann using OpenAI GPT-6 Astra in Codex. The AI system selected the research problem, generated the proposed proof, drafted the mathematical exposition, and wrote and executed the verification program. Its role was substantive mathematical work, not merely language editing. The conjecture and the prior extremal-probability framework are due to the authors cited below. No independent expert verification or peer review of this manuscript is claimed. The human author is responsible for the submitted work; the AI tool is not an author.

\end{document}